\tikzset{
	mybrace/.style={decorate,decoration={brace,aspect=#1}}
}
\newcommand\NN{\mathbb{N}}
\newcommand\naturals{\mathbb{N}}
\newcommand\ints{\mathbb{Z}}
\newcommand\reals{\mathbb{R}}
\def\MP{{\mathcal P}}
\def\MQ{{\mathcal Q}}
\def\MR{{\mathcal R}}
\newcommand{\calP}{\mathcal{P}}
\newcommand{\calQ}{\mathcal{Q}}
\newcommand{\calR}{\mathcal{R}}
\newcommand{\minus}{\ensuremath{\smallsetminus}}
\DeclareMathOperator{\Ext}{Ext}
\DeclareMathOperator{\Space}{sp}
\newcommand{\fraka}{\mathfrak{a}}
\def\theequation{\thesection.\@arabic \c@equation}
\theoremstyle{plain}
\newtheorem{theorem}[equation]{Theorem}
\newtheorem{Lemma}[equation]{Lemma}
\newtheorem{proposition}[equation]{Proposition}
\theoremstyle{definition}
\newtheorem{definition}[equation]{Definition}
\newtheorem{discussion}[equation]{Discussion}
\newenvironment{discussionbox}[1][]{%
    \begin{discussion}[#1]\pushQED{\qed}}{\popQED \end{discussion}}
\newtheorem{example}[equation]{Example}
\newenvironment{examplebox}[1][]{%
    \begin{example}[#1]\pushQED{\qed}}{\popQED \end{example}}
\newtheorem{question}[equation]{Question}
\newcommand{\define}[1]{\emph{#1}}
\title {The Charney-Davis conjecture for simple thin polyominoes}
\author{Manoj Kummini}
\address{Chennai Mathematical Institute, Siruseri, Tamilnadu 603103. India}
\email{mkummini@cmi.ac.in}
\author{Dharm Veer}
\address{Chennai Mathematical Institute, Siruseri, Tamilnadu 603103. India}
\email{dharm@cmi.ac.in}
\thanks{MK was partly supported by the grant CRG/2018/001592
	from Science and Engineering Research Board, India and
	by an Infosys Foundation fellowship.
	DV was partly supported by an Infosys Foundation fellowship.}
\subjclass{13D40 (Primary)}
\begin{document}

\begin{abstract}
Let $\MP$ be a simple thin polyomino and $\Bbbk$ a field.
Let $R$ be the toric $\Bbbk$-algebra associated to $\MP$.
Write the Hilbert series of $R$ as
$h_{R}(t)/(1-t)^{\dim(R)}$.
We show that
$$(-1)^{\left\lfloor{\frac{\deg h_R(t)}{2}}\right\rfloor}h_{R}(-1) \geq 0$$
if $R$ is Gorenstein. This shows that the Gorenstein rings
associated to simple thin polyominoes satisfy the Charney-Davis conjecture.
\end{abstract}

	\maketitle

\section{Introduction}

The Charney-Davis conjecture~\cite[Conjecture~D]{ChDaeuler95}
asserts that if $h(t)$ is the $h$-polynomial of a flag simplicial homology
$(d-1)$-sphere, then $(-1)^{\lfloor \frac {d} {2}\rfloor} h(-1 ) \geq 0$.
Stanley~\cite[Problem~4]{Stanposprobs00} extended this conjecture to
Gorenstein${}^*$ flag simplicial complexes. Generalizing it further, Reiner
and Welker~\cite[Question~4.4]{ReWeCDNS05} posed the following:

\begin{question}
\label{question:CD}
Let $\Bbbk$ be a field and $R$ a standard graded Gorenstein Koszul
$\Bbbk$-algebra.
Write the Hilbert series of $R$ as $h_R(t)/(1-t)^{\dim(R)}$.
Is
$$
(-1)^{\left\lfloor{\frac{\deg h_R(t)}{2}}\right\rfloor}h_R(-1) \geq 0?
$$
\end{question}
We say that a standard graded Gorenstein Koszul $\Bbbk$-algebra $R$ is
\define{CD} if it gives an affirmative answer to the above question.
In this article, we show that the toric $\Bbbk$-algebras
associated to simple thin polyominoes are CD, when they are Gorenstein.

Suppose that, in the notation of Question~\ref{question:CD}, $\deg h_R(t)$
is odd.
Then $h_R(-1)=0$; see, e.g.,~\cite[Corollary~4.4.6]{BrHe:CM}.
Therefore Question~\ref{question:CD} is open only when $\deg h_R(t)$ is even.
See the bibliography of~\cite{ReWeCDNS05} and of~\cite{Stanposprobs00} for
various classes of rings that are CD.
A class of CD rings related to the ones we study in this paper are
Gorenstein Hibi
rings~\cite[Corollary~4.3]{BrandenSignGradedPosetsCDConj2004}.
Recently, D'Al\`i and
Venturello~\cite{DaliVenturelloKoszulGor2021} proved that the answer to
Question~\ref{question:CD} is negative in general.

A \define {cell} in $\reals^2$ is a set of the form $\{(x, y) \in \reals^2
\mid a \leq x \leq a+1, b\leq y \leq b+1\}$ where $(a,b ) \in \ints^2$.
Let $\calP$ be a finite collection of cells. Then $\calP$ determines a
unique topological subspace $\Space(\calP) := \cup_{C \in \calP } C$
of $\reals^2$. By abuse of terminology, we assign the topological
attributes to $\calP$ that $\Space(\calP)$ has.
We say that $\calP$ is a \define{polyomino}
if $\calP$ is connected and does not have a finite
cut-set~\cite[4.7.18]{StanEC1} (i.e., $\Space(\calP)$ has these properties).
There is a $\Bbbk$-algebra $\Bbbk[\calP]$ of finite-type associated to
$\calP$~\cite{QureshiPolyominoes2012}.

We say that a polyomino $\calP$ is \define{simple}
if $\Space(\calP)$ is simply connected;
it is \define{thin} if it does not have a $2 \times 2$ square
such as the one shown in Figure~\ref{figure:twoRooks}.
The S-property of simple thin polyominoes was introduced
in~\cite{RinaldoRomeoHilbSeriesThinPolyominoes2021}
to characterise such polyominoes $\calP$ for which $\Bbbk[\calP]$ is
Gorenstein.
If $\MP$ is simple then $\Bbbk[\MP]$ is a Koszul
algebra~\cite[Corollary~2.3]{QureshiShibutaShikamaPrime2017}.
Therefore it is natural to ask whether $\Bbbk[\calP]$ is CD if $\calP$ is a
simple thin polyomino with the S-property. In this regard, we show the
following:

\begin{theorem}
\label{theorem:cd}
Let $\calP$ be a collection of cells such that its connected components
are simple thin polyominoes with the S-property.
Then $\Bbbk[\calP]$ is CD.
\end{theorem}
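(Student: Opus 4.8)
\section*{Proof proposal}

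The plan is to reduce the Charney--Davis inequality to a real-rootedness statement about a combinatorial polynomial. First I would dispose of disconnectedness. If $\calP = \calP_1 \sqcup \cdots \sqcup \calP_r$ is the decomposition into connected components, then the monomial generators and binomial relations of distinct components involve disjoint sets of variables, so $\Bbbk[\calP] \cong \Bbbk[\calP_1] \otimes_{\Bbbk} \cdots \otimes_{\Bbbk} \Bbbk[\calP_r]$ as standard graded algebras. Hence $h_{\Bbbk[\calP]}(t) = \prod_{i} h_{\Bbbk[\calP_i]}(t)$, and $\Bbbk[\calP]$ is Gorenstein and Koszul because each factor is. Each $h_{\Bbbk[\calP_i]}(t)$ is palindromic (Gorenstein), so if any one has odd degree then $h_{\Bbbk[\calP]}(-1)=0$ and the inequality is trivial; otherwise all degrees are even, $\lfloor \tfrac{1}{2}\deg h_{\Bbbk[\calP]} \rfloor = \sum_i \lfloor \tfrac{1}{2}\deg h_{\Bbbk[\calP_i]}\rfloor$, and the required sign multiplies correctly across the factors. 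Thus it suffices to treat a single simple thin polyomino with the S-property.

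The key input is a combinatorial description of $h_R(t)$ for $R = \Bbbk[\calP]$. For a simple thin polyomino the $h$-polynomial equals the rook polynomial $r_{\calP}(t) = \sum_k r_k t^k$, where $r_k$ counts placements of $k$ non-attacking rooks on the cells of $\calP$ (this is the Hilbert-series computation of~\cite{RinaldoRomeoHilbSeriesThinPolyominoes2021}). I would then reinterpret $r_{\calP}(t)$ as a matching polynomial. Form the bipartite graph $G$ whose two vertex classes are the maximal horizontal segments and the maximal vertical segments of $\calP$, with one edge per cell joining the horizontal and vertical segments containing it. Two cells are non-attacking exactly when they share neither a horizontal nor a vertical segment, so a $k$-rook placement is precisely a $k$-matching of $G$; hence $r_{\calP}(t) = \sum_k m_k(G)\, t^k$ is the matching generating polynomial of $G$.

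Real-rootedness then finishes the argument. By the Heilmann--Lieb theorem the matching polynomial of any graph has only real roots, which forces $h_R(t) = r_{\calP}(t)$ to have only real, negative roots (its constant term is $1$). Since $R$ is Gorenstein, $h_R(t)$ is palindromic; if $\deg h_R$ is odd we are done because $h_R(-1)=0$, so assume $\deg h_R = 2m$. A palindromic polynomial with nonnegative coefficients and only real roots is $\gamma$-positive: its roots occur in reciprocal pairs $\{\rho,\rho^{-1}\}$ with $\rho<0$, and pairing them writes $h_R(t)$ as a product of factors $(1+t)^2 + (b-2)t$ with $b\geq 2$, whence $h_R(t)=\sum_{j=0}^{m}\gamma_j\, t^j(1+t)^{2m-2j}$ with all $\gamma_j\geq 0$. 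Evaluating at $t=-1$ kills every term except $j=m$, giving $(-1)^m h_R(-1)=\gamma_m\geq 0$, which is exactly the Charney--Davis inequality.

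I expect the main obstacle to be the first input rather than the real-rootedness: establishing the identity $h_R(t)=r_{\calP}(t)$ rests on the explicit Hilbert-series analysis of simple thin polyominoes together with the S-property governing when $R$ is Gorenstein, and it is this step that confines the argument to thin polyominoes (the matching and real-rootedness machinery would otherwise be indifferent to the shape of $\calP$). Once that identification and the palindromy are in hand, the combination of the Heilmann--Lieb theorem with the elementary reciprocal-root bookkeeping is what makes the sign come out right.
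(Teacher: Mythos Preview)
Your argument is correct and takes a genuinely different route from the paper.  The paper never invokes real-rootedness; instead it proves the inequality by a direct induction on the rook number.  Using Lemma~\ref{lem:collapsible} it selects a special collapse datum $(I,J,\calP^I)$ and, via Discussion~\ref{dis:single} and Lemma~\ref{lem:sproperty}, obtains a recursion
\[
r_{\calP}(t) = (1+t)\,r_{\calR}(t) + t\prod_{i=1}^{k} r_{\calQ_i}(t),
\]
where each $\calQ_i$ is again a simple thin polyomino with the S-property and $\sum_i r(\calQ_i) = r(\calP)-2$.  Evaluating at $t=-1$ kills the first summand, and the sign bookkeeping on the product then reduces the Charney--Davis inequality for $\calP$ to that for the $\calQ_i$.

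Your approach sidesteps all of this structural work: once $h_R(t)=r_{\calP}(t)$ is known (which holds for every simple thin polyomino, not only those with the S-property), the identification of $r_{\calP}(t)$ with a bipartite matching generating polynomial and Heilmann--Lieb give real-rootedness, and palindromy then yields $\gamma$-positivity, which is strictly stronger than the CD inequality.  The only point that deserves a word of care is that the ``non-attacking'' relation used in~\cite{RinaldoRomeoHilbSeriesThinPolyominoes2021} is ``sharing a maximal horizontal or vertical strip of cells'' rather than merely ``same row or column of $\ints^2$''; this is consistent with your choice of bipartite graph (maximal horizontal strips versus maximal vertical strips), and indeed the identity $r_{\calP,C}(t)=t\,r_{\calP,\widehat I}(t)$ used in Discussion~\ref{discussionbox:deletion} confirms that convention.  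What the paper's approach buys in exchange for its extra length is self-containment (no appeal to Heilmann--Lieb) and a finer structural decomposition of S-property polyominoes that may be of independent use; what your approach buys is brevity and a stronger conclusion.
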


Note that if $\calP_1, \ldots, \calP_m$ are the connected components of
$\calP$, then $\Bbbk[\calP] \simeq
\Bbbk[\calP_1] \otimes_\Bbbk \cdots \otimes_\Bbbk \Bbbk[\calP_m]$.
Therefore $\Bbbk[\calP]$ Gorenstein (respectively, Koszul) if and only if
$\Bbbk[\calP_i]$ is Gorenstein (respectively, Koszul) for each $i$.

Section~\ref{sec:preliminaries} contains the definitions and preliminaries.
Proof of the theorem is given in Section~\ref{sec:proofofthetheorem}.

\section{Preliminaries}\label{sec:preliminaries}

\subsection*{Gorenstein rings, Koszul rings, Hilbert series, etc.}
Let $R$ be a  finitely generated $\Bbbk$-algebra.
Throughout this paper, we deal with only standard
graded $\Bbbk$-algebras $R$, i.e., $R$ is generated as a $\Bbbk$-algebra
by homogeneous elements of degree $1$.
We say that $R$ is \define{Gorenstein} if it
is Cohen-Macaulay and $\Ext^{\dim R}_R(\Bbbk,R ) \simeq \Bbbk$.
We say that $R$ is \define{Koszul} if $\Bbbk$ has a linear free resolution
as an $R$-module.

The \define{Hilbert series} $H_R(t )$ of $R$ is the formal power series
$\sum_{i \in \naturals } \dim_\Bbbk R_i t^i$ where for each $i$, $R_i$ is
the finite-dimensional $\Bbbk$-vector-space of the homogeneous elements of
$R$ of degree $i$. There exists a unique polynomial $h_R(t)$ such that
\[
H_R(t ) = \frac{h_R(t)}{(1-t )^{\dim R } }.
\]
Question~\ref{question:CD} asks whether
$(-1)^{\left\lfloor{\frac{\deg h_R(t)}{2}}\right\rfloor}h_R(-1)$ is
non-negative for all Gorenstein Koszul algebras $R$.
Parenthetically, note that, if $R$ is Cohen-Macaulay, $\deg h_R(t)$ is the
Castelnuovo-Mumford regularity of $R$.

\subsection*{Polyominoes}

Let $\calP$ be a finite collection of cells. As mentioned in the
introduction, we treat $\calP$ interchangeably with the topological space
$\Space(\calP)$.
Let $S = \Bbbk[\{x_{i,j } \mid (i,j ) \in \calP \cap \ints^2\} ]$
be the standard graded polynomial ring in the variables
$x_{i,j }$.
Let $\fraka_\calP$ be the binomial ideal generated by the binomials
$x_{i,j} x_{k,l } - x_{k,j } x_{i,l }$ for all $(i,j ), (k,l) \in
\calP \cap \ints^2$ such that the rectangle with vertices
$(i,j)$, $(k,l)$, $(k,j )$ and $(i,l)$ is a subset of $\Space(\calP)$.
Define $\Bbbk[\calP] = S/\fraka_\calP$~\cite{QureshiPolyominoes2012}.
If $\calP$ is a simple polyomino, then $\Bbbk[\calP ]$ is a Koszul
Cohen-Macaulay integral
domain~\cite[Corollary~2.3]{QureshiShibutaShikamaPrime2017}.

For $k \in \naturals$, a \define{$k$-rook configuration} in $\calP$ is an
arrangement of $k$ rooks in pairwise non-attacking positions.
The \define{rook polynomial}
$r_\calP(t)$ of $\calP$ is $\sum_{k \in \naturals } r_k t^k$
where $r_k$ is the number of $k$-rook configurations in $\calP$.
The \define{rook number} $r(\calP)$ of $\calP$ is the degree of
$r_{\calP}(t)$, i.e., the largest $k$ such that there is a $k$-rook
configuration in $\calP$.
If $\MP$ is a simple thin polyomino, then $h_{\Bbbk[\calP]}(t) =
r_{\MP}(t)$~\cite[Theorem~1.1]{RinaldoRomeoHilbSeriesThinPolyominoes2021}.
\begin{figure}
\begin{center}
\begin{tikzpicture}[scale=2]
\draw[] (0,0)--(0,1)--(1,1)--(1,0)--(0,0) (0,.5)--(1,.5) (.5,0)--(.5,1);
\filldraw[black] (0,0) circle (.5pt) node[anchor=north]  {};
\filldraw[black] (.5,0) circle (.5pt) node[anchor=north] {};
\filldraw[black] (0,.5) circle (.5pt) node[anchor=east] {};
\filldraw[black] (.5,.5) circle (.5pt) node[anchor=west] {};
\filldraw[black] (1,1) circle (.5pt) node[anchor=north]  {};
\filldraw[black] (.5,1) circle (.5pt) node[anchor=north] {};
\filldraw[black] (1,.5) circle (.5pt) node[anchor=east] {};
\filldraw[black] (0,1) circle (.5pt) node[anchor=west] {};
\filldraw[black] (1,0) circle (.5pt) node[anchor=east] {};
\end{tikzpicture}
\caption{A $2\times 2$ square polyomino}\label{figure:twoRooks}
\end{center}
\end{figure}
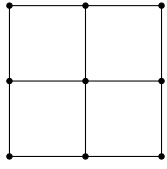

Let $C,D\in \MP$. We say that $C$ is a \define{neighbour} of $D$ if
$C\cap D$ is a line segment.
A \define{path} from $C$ to $D$ is a sequence of cells
$C = C_0, C_1, \ldots, C_m = D$ such that for all $i \neq j$, $C_i \neq
C_j$
and for all $1 \leq i \leq m$, $C_i$ is a neighbour of $C_{i-1}$.
If $\calP$ is a simple thin polyomino, then for all cells $C, D$ of
$\calP$, there is a unique path from $C$ to $D$.

A \define{inner interval} of $\calP$ is a subcollection $I$ of $\calP$ such
that $\Space(I)$ (which is a subspace of $\Space(\calP)$)
is a rectangle with vertices
$(i_1,j_1)$, $(i_1+1,j_1 )$, $(i_1,j_2 )$ and $(i_1+1, j_2 )$
or a rectangle with vertices
$(i_1,j_1)$, $(i_1,j_1+1)$, $(i_2,j_1 )$ and $(i_2, j_1+1)$
for some $i_1, i_2, j_1, j_2 \in \ints$ with
$i_1 < i_2$ and $j_1 < j_2$.
An inner interval of $\calP$ is \define{maximal}
if it is maximal under inclusion.

Let $\MP$ be a simple thin polyomino.
Observe that any cell of $\MP$ belongs to at most two maximal inner intervals.
A cell $C$ is said to be an {\em end-cell} of a maximal inner interval $I$
if $C\in I$ and $C$ has exactly one neighbour cell in $I$.
A cell of $\MP$ is called {\em single} if it belongs to exactly one maximal
inner interval of $\MP$.
We say that $\MP$ has the {\em S-property} if every maximal inner interval
of $\MP$ has exactly one single cell.
For a simple  thin polyomino $\MP$,  $\Bbbk[\MP]$ is Gorenstein if and only if $\MP$
has the
S-property~\cite[Theorem~4.2]{RinaldoRomeoHilbSeriesThinPolyominoes2021}.

\section{Proof of the  Theorem}\label{sec:proofofthetheorem}

We begin with an observation about how Hilbert series and rook polynomials
behave in disjoint unions of polyominoes.

\begin{proposition}
\label{proposition:disjUnion}
Let $\calP$ be a finite collection of cells.
Write $\calP_1, \ldots, \calP_m$ for the connected components.
Then:
\[
h_{\Bbbk[\calP]}(t) = \prod_{i=1}^m h_{\Bbbk[\calP_i]}(t)
\;\text{and}\;
r_{\calP}(t) = \prod_{i=1}^m r_{\calP_i}(t).
\]
In particular, if $\calP_i$ is a simple thin polyomino for each $i$, then
$h_{\Bbbk[\calP]}(t) = r_{\calP}(t)$.
\end{proposition}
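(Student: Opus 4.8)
The plan is to establish the two product formulas separately and then combine them through the cited equality $h_{\Bbbk[\calP']}(t) = r_{\calP'}(t)$ for simple thin polyominoes. Both products rest on a single geometric fact: since $\calP_1, \ldots, \calP_m$ are the connected components of the compact set $\Space(\calP) = \bigcup_i \Space(\calP_i)$, the subspaces $\Space(\calP_i)$ are pairwise disjoint and in fact lie at positive distance from one another. In particular, distinct components share no lattice point of $\ints^2$, and no rectangle contained in $\Space(\calP)$ can meet more than one component.

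For the first equality I would show $\Bbbk[\calP] \simeq \Bbbk[\calP_1] \otimes_\Bbbk \cdots \otimes_\Bbbk \Bbbk[\calP_m]$ as graded $\Bbbk$-algebras. Writing $S$ for the polynomial ring on the variables $x_{i,j}$, $(i,j) \in \calP \cap \ints^2$, and $S_i$ for the one on the variables indexed by $\calP_i \cap \ints^2$, the disjointness of the components partitions the variables, so $S \simeq S_1 \otimes_\Bbbk \cdots \otimes_\Bbbk S_m$. Every binomial generator $x_{i,j}x_{k,l} - x_{k,j}x_{i,l}$ of $\fraka_\calP$ arises from a rectangle contained in $\Space(\calP)$; being connected, that rectangle lies in a single component, so the generator involves only variables of one $S_i$ and is exactly a generator of $\fraka_{\calP_i}$, while conversely each generator of $\fraka_{\calP_i}$ is a generator of $\fraka_\calP$. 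Hence $\fraka_\calP = \sum_i \fraka_{\calP_i}S$ and the tensor decomposition follows. Multiplicativity of Hilbert series under $\otimes_\Bbbk$ gives $H_{\Bbbk[\calP]}(t) = \prod_i H_{\Bbbk[\calP_i]}(t)$; since Krull dimension is additive over $\otimes_\Bbbk$, the denominators $(1-t)^{\dim \Bbbk[\calP_i]}$ multiply to $(1-t)^{\dim \Bbbk[\calP]}$, and uniqueness of the $h$-polynomial yields $h_{\Bbbk[\calP]}(t) = \prod_i h_{\Bbbk[\calP_i]}(t)$.

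For the rook identity I would argue that a $k$-rook configuration in $\calP$ is the same data as a choice, for each $i$, of a $k_i$-rook configuration in $\calP_i$ with $\sum_i k_i = k$. Two rooks lying in distinct components are automatically non-attacking, since cells in distinct components share neither a row nor a column of $\calP$; thus the attacking relation never crosses between components, and restricting a configuration to each component sets up the claimed bijection. Passing to generating functions turns this bijection into $r_\calP(t) = \prod_i r_{\calP_i}(t)$.

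The \emph{in particular} statement then follows by combining the two displays: if each $\calP_i$ is simple and thin, then $h_{\Bbbk[\calP_i]}(t) = r_{\calP_i}(t)$ by the cited theorem, so $h_{\Bbbk[\calP]}(t) = \prod_i h_{\Bbbk[\calP_i]}(t) = \prod_i r_{\calP_i}(t) = r_\calP(t)$. The only genuinely delicate points are the two structural claims flagged at the outset — that distinct components share no lattice point and that neither a defining rectangle nor a rook-attack can cross between components — both of which reduce to the positive separation of the $\Space(\calP_i)$. Once these are in hand, the remainder is bookkeeping with Hilbert series and generating functions.
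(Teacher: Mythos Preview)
Your proposal is correct and follows essentially the same route as the paper: establish the tensor decomposition $\Bbbk[\calP] \simeq \bigotimes_i \Bbbk[\calP_i]$ from disjointness of the vertex sets, deduce multiplicativity of the Hilbert series and hence of the $h$-polynomials, argue that rook configurations decompose independently over components to get $r_\calP(t)=\prod_i r_{\calP_i}(t)$, and then invoke the Rinaldo--Romeo identity componentwise. You simply spell out in more detail the points the paper leaves implicit (why the defining binomials lie in a single component, why Krull dimension is additive, why attacks cannot cross components).
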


\begin{proof}
Vertices of the $\calP_i$ are disjoint, so
$\Bbbk[\calP] \simeq
\Bbbk[\calP_1] \otimes_\Bbbk \cdots \otimes_\Bbbk \Bbbk[\calP_m]$.
Hence
$H_{\Bbbk[\calP]}(t) =
\prod_{i=1}^m H_{\Bbbk[\calP_i]}(t)$,
from which it follows that
$h_{\Bbbk[\calP]}(t) = \prod_{i=1}^m h_{\Bbbk[\calP_i]}(t)$.
Let $k \in \naturals$. Then $k$-rook configurations in $\calP$
corresponds to independent choices of $k_i$-rook configurations in
$\calP_i$ for each $1 \leq i \leq m$ and for each
tuple $(k_1, \ldots, k_m) \in \naturals^m$ with $\sum_i k_i = k$.
Hence
$r_{\calP}(t) = \prod_{i=1}^m r_{\calP_i}(t)$.
The final assertion now follows
from noting that for each $i$,
$h_{\Bbbk[\calP_i]}(t) = r_{\calP_i}(t)$
since $\calP_i$ is a simple thin
polyomino~\cite[Theorem~1.1]{RinaldoRomeoHilbSeriesThinPolyominoes2021}.
\end{proof}

Let $\calP$ be a simple thin polyomino.
In~\cite[Definition~3.4]{RinaldoRomeoHilbSeriesThinPolyominoes2021},
Rinaldo and Romeo introduced a notion of collapsing $\calP$ in a maximal
inner interval, and showed that
if $\calP$ has at least two maximal inner intervals,
then there exists a maximal inner interval in which $\calP$ is
collapsible~\cite[Proposition~3.7]{RinaldoRomeoHilbSeriesThinPolyominoes2021}.
We need a refinement  of this result for simple thin polyominoes with the
$S$-property, for which we
rephrase~\cite[Definition~3.4]{RinaldoRomeoHilbSeriesThinPolyominoes2021}
in a slightly different way.

\begin{definition}
\label{def:coll}
Let $\MP$ be a simple thin polyomino. A \define{collapse datum} on $\calP$
is a tuple $(I, J,\MP^I)$, where $I$ and $J$ are maximal inner
intervals and $\MP^I$ is a sub-polyomino of $\calP$
satisfying the following conditions:
\begin{enumerate}

  \item
$J$ is the only maximal inner interval of $\calP$ such that $I \cap J$ is a cell;

\item
$\calP^I \subseteq J$ and
$I \cap J \not \subset \calP^I$.

\item
$\calP \minus ( I \cup \calP^I)$ is a non-empty sub-polyomino of $\calP$.

\end{enumerate}
\end{definition}

Figure~\ref{fig:collapsible} gives an example of a collapse datum. Note
that since 
$\MP^I$ is a sub-polyomino of $\calP$
and $\calP^I \subseteq J$, it is an inner interval if it is non-empty. When $\calP$ has at least two maximal inner intervals, the maximal inner
intervals $I$ and $J$ defined in the Definition~\ref{def:coll} exist
by~\cite[Lemma~3.6]{RinaldoRomeoHilbSeriesThinPolyominoes2021}.

\begin{figure}%
		\centering
		\begin{tikzpicture}[scale=2]
		\draw[] (0,1.5)--(-.5,1.5)--(-.5,1)--(0,1)   (0,0)--(0,1.5)--(.5,1.5)

		(1,1.5)--(1.5,1.5)--(1.5,2)--(1,2)--(1,1.5)

		(2,.5)--(0,.5)

		(0,0)--(.5,0) (0,1)--(2,1)
		(0.5,0)--(0.5,1.5) (1,0.5)--(1,1)

		(1.5,.5)--(1.5,1) (1.5,0)--(1.5,-.5)--(1,.-.5)--(1,0)--(1.5,0)
		(2,0.5)--(2,1)

		(2.5,.5)--(2.5,1)--(3,1)--(3,.5)--(2.5,.5);

		\draw[dotted]   (2,1)--(2.5,1)(2.5,.5)--(2,0.5)
		 (1.5,1)--(1.5,1.5) (1,1)--(1,1.5)
		 (1,.5)--(1,0) (1.5,0)--(1.5,0.5);

		 \draw[mybrace=0.5, thick] (1.55,1.95) -- (3,1.95);

		\filldraw[black] (0,0) circle (.5pt) node[anchor=north]  {};
		\filldraw[black] (.5,0) circle (.5pt) node[anchor=north] {};
		\filldraw[black] (0,.5) circle (.5pt) node[anchor=east] {};
		\filldraw[black] (.5,.5) circle (.5pt) node[anchor=west] {};
		\filldraw[black] (1,1) circle (.5pt) node[anchor=north]  {};
		\filldraw[black] (.5,1) circle (.5pt) node[anchor=north] {};
		\filldraw[black] (1,.5) circle (.5pt) node[anchor=north] {};
		\filldraw[black] (0,1) circle (.5pt) node[anchor=east] {};

		\filldraw[black] (1.5,0.5) circle (.5pt) node[anchor=north] {};
		\filldraw[black] (0.5,1.5) circle (.5pt) node[anchor=east] {};
		\filldraw[black] (1.5,1.5) circle (.5pt) node[anchor=west] {};
		\filldraw[black] (1,1.5) circle (.5pt) node[anchor=north]  {};
		\filldraw[black] (1.5,0) circle (.5pt) node[anchor=north] {};
		\filldraw[black] (-.5,1.5) circle (.5pt) node[anchor=east] {};
		\filldraw[black] (0,1.5) circle (.5pt) node[anchor=east] {};
		\filldraw[black] (2,1) circle (.5pt) node[anchor=east] {};
		\filldraw[black] (2,.5) circle (.5pt) node[anchor=east] {};
		\filldraw[black] (-.5,1) circle (.5pt) node[anchor=west] {};
		\filldraw[black] (1.5,1) circle (.5pt) node[anchor=east] {};
		\filldraw[black] (1,0) circle (.5pt) node[anchor=east] {};

		\filldraw[] (1.25,1.3) circle (0pt) node[anchor=center]  {$\vdots$};
		\filldraw[] (1.25,.3) circle (0pt) node[anchor=center]  {$\vdots$};
		\filldraw[] (2.25,0.75) circle (0pt) node[anchor=center]  {$\ldots$};

		\filldraw[] (1.25,0.75) circle (0pt) node[anchor=center]  {$D$};
		\filldraw[] (1.25,2.25) circle (0pt) node[anchor=center]  {$I$};
		\filldraw[] (3.25,0.75) circle (0pt) node[anchor=center]  {$J$};
		\filldraw[] (2.22,2.25) circle (0pt) node[anchor=center]  {$\MP^I$};

		\end{tikzpicture}
	\caption{Collapse datum (\textit{cf}. Definition~\protect{\ref{def:coll}})}
    \label{fig:collapsible}
\end{figure}
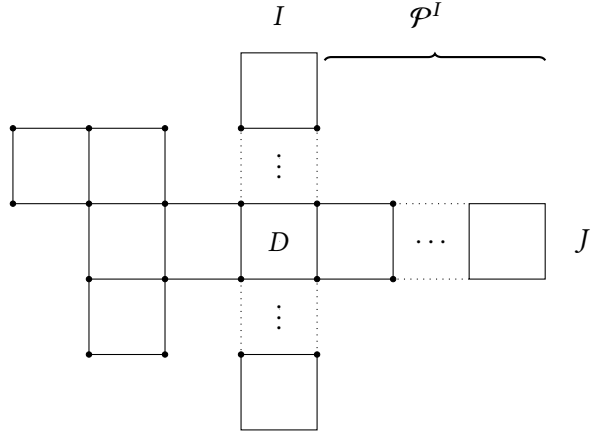

\begin{discussionbox}\label{rem:neighbourhoodofJ}
Let $\MP$ be a simple thin polyomino with S-property. Assume that $\MP$ is
not a cell.
Then it has a collapse datum $(I, J,\MP^I)$.
Since, additionally, $\MP$ has the S-property,
$I$ has exactly two cells and $\MP^I$ is either empty or a cell.
Write $I=\{C,D\}$ with $C$ denoting the single cell of $I$ and $\{D\} = I
\cap J$.
Denote the single cell of $J$ by $E$.
Let $C_1,\ldots,C_k$ be the cells of $J$ different from $D$ and $E$.
For $1\leq i \leq k$, let $B_i$ be the cell in
$\MP$ such that $B_i\notin J$ and $C_i$ is a neighbour cell of $B_i$; such
a $B_i$ must exist, since $C_i$ is not a single cell.
We now consider the various cases.

\begin{asparaenum}

\item
$\MP^I \neq \varnothing$.
Equivalently, $\MP^I = \{E\}$.
Then we may assume that $C_k$ is an end-cell of $J$ and for all $i \in \{1, \ldots, k-1 \}$,
$C_{i}$ and $C_{i+1}$ are neighbours. So
$B_{i}$ and $B_{i+1}$ can not be neighbours, since $\calP$ is thin.
Hence for all $i \in \{1, \ldots, k\}$ whether $B_i$ is above $C_i$ or is
below $C_i$ determined by whether $i$ is even or odd.
Therefore in the neighbourhood of $J$, $\calP$ is as
shown in Figure~\ref{fig:P_2nonempty}.

\item
$\MP^I = \varnothing$,
$C_k$ is an end-cell of $J$ and $E$ is a neighbour cell of $C_k$.
We may assume that for all $i \in \{1, \ldots, k-2 \}$,
$C_{i}$ and $C_{i+1}$ are neighbours, so
$B_{i}$ and $B_{i+1}$ cannot be neighbours.
Therefore, using the same considerations as in the above case, we see that
$\calP$ is as shown in Figure~\ref{fig:P_2emptycase1} in the neighbourhood
of $J$.

\item
$\MP^I = \varnothing$ and $E$ is an end-cell of $J$. Then, in the neighbourhood of $J$, $\calP$ is as shown in Figure~\ref{fig:P_2emptycase3}.

\item
$\MP^I = \varnothing$,
$C_k$ is an end-cell of $J$ and $E$ is not a
neighbour cell of $C_k$.
Then, in the neighbourhood of $J$, $\calP$ is
one of the figures in Figure~\ref{fig:P_2emptycase2}.

\end{asparaenum}
\end{discussionbox}

\begin{discussionbox}
By the \define{first end-cell} of $J$, we mean
\[
\begin{cases}
E, & \text{if}\; \calP^I = \{E \};\\
I \cap J, & \text{if}\; \calP^I = \varnothing.
\end{cases}
\]
(Note that in both of the above cases, the cell in question is an end-cell
of $J$.)
We call the other end-cell of $J$ the \define{second end-cell} of $J$.
If $E$ is the second end-cell of $J$, then $E$ has exactly
one neighbour cell.
If $C_k$ is the second end-cell of $J$ and $E$ is not a
neighbour cell of $C_k$, then $C_k$ has exactly two neighbour cells. If
$C_k$ is the second end-cell of $J$ and $E$ is a neighbour cell of $C_k$,
then $C_k$ has two or three neighbour cells; see
Figures~\ref{fig:P_2nonempty},~\ref{fig:P_2emptycase1}, \ref{fig:P_2emptycase3}
and~\ref{fig:P_2emptycase2}.
\end{discussionbox}

\begin{figure}%
		\centering
		\begin{tikzpicture}[scale=2]
		\draw[]  (.5,0)--(1.5,0) (2,0)--(3.5,0)--(3.5,.5)--(2,.5) (1.5,.5)--(.5,.5)--(.5,0)
		(3.5,0)--(3.5,.5) (2.5,0)--(2.5,.5) (3,0)--(3,.5) (2,0)--(2,.5) (1.5,0)--(1.5,.5)
		(1,0)--(1,.5)
		(3,.5)--(3,1)--(3.5,1)--(3.5,.5)
		(3,0)--(3,-.5)--(2.5,-.5)--(2.5,0)
		(2,.5)--(2,1)--(2.5,1)--(2.5,.5)
		(1,.5)--(1,1)--(1.5,1)--(1.5,.5)
		(1,0)--(1,-.5)--(.5,-.5)--(.5,0)
        (3.5,0)--(4,0)--(4,.5)--(3.5,.5);

		\draw[dotted] (1.5,0)--(2,0) (1.5,0.5)--(2,0.5) ;

		\filldraw[] (3.25,1.25) circle (0pt) node[anchor=center]  {$I$};
		\filldraw[] (3.75,0.25) circle (0pt) node[anchor=center]  {$E$};
		\filldraw[] (4.25,0.25) circle (0pt) node[anchor=center]  {$J$};

		\filldraw[] (3.25,0.25) circle (0pt) node[anchor=center]  {$D$};

		\filldraw[] (2.75,0.25) circle (0pt) node[anchor=center]  {$C_1$};
		\filldraw[] (2.25,0.25) circle (0pt) node[anchor=center]  {$C_2$};
		\filldraw[] (1.75,0.25) circle (0pt) node[anchor=center]  {$\cdots$};
		\filldraw[] (1.25,0.25) circle (0pt) node[anchor=center]  {$C_{k-1}$};
		\filldraw[] (.75,0.25) circle (0pt) node[anchor=center]  {$C_k$};
     	\filldraw[] (.75,-0.25) circle (0pt) node[anchor=center]  {$B_k$};

		\filldraw[] (1.25,0.75) circle (0pt) node[anchor=center]  {$B_{k-1}$};
		\filldraw[] (3.25,0.75) circle (0pt) node[anchor=center]  {$C$};
		\filldraw[] (2.25,0.75) circle (0pt) node[anchor=center]  {$B_2$};
		\filldraw[] (2.75,-0.25) circle (0pt) node[anchor=center]  {$B_1$};

		\end{tikzpicture}
		\caption{$\calP^I$ non-empty}\label{fig:P_2nonempty}
\end{figure}

\begin{figure}%
		\centering
		\begin{tikzpicture}[scale=2]
		\draw[]  (.5,0)--(1.5,0) (2,0)--(3.5,0)--(3.5,.5)--(2,.5) (1.5,.5)--(.5,.5)--(.5,0)--(0,0)--(0,.5)--(.5,.5)
		(3.5,0)--(3.5,.5) (2.5,0)--(2.5,.5) (3,0)--(3,.5) (2,0)--(2,.5) (1.5,0)--(1.5,.5)
		(1,0)--(1,.5)
		(3,.5)--(3,1)--(3.5,1)--(3.5,.5)
		(3,0)--(3,-.5)--(2.5,-.5)--(2.5,0)
		(2,.5)--(2,1)--(2.5,1)--(2.5,.5)
		(1,.5)--(1,1)--(1.5,1)--(1.5,.5);

		\draw[dotted] (1.5,0)--(2,0) (1.5,0.5)--(2,0.5) (0,0)--(0,-.5)--(.5,-.5)--(.5,0)
		(0,.5)--(0,1)--(.5,1)--(.5,.5);

		\filldraw[] (3.25,1.25) circle (0pt) node[anchor=center]  {$I$};
		\filldraw[] (3.75,0.25) circle (0pt) node[anchor=center]  {$J$};

		\filldraw[] (3.25,0.25) circle (0pt) node[anchor=center]  {$D$};

		\filldraw[] (2.75,0.25) circle (0pt) node[anchor=center]  {$C_1$};
		\filldraw[] (2.25,0.25) circle (0pt) node[anchor=center]  {$C_2$};
		\filldraw[] (1.75,0.25) circle (0pt) node[anchor=center]  {$\cdots$};
		\filldraw[] (1.25,0.25) circle (0pt) node[anchor=center]  {$C_{k-1}$};
		\filldraw[] (.75,0.25) circle (0pt) node[anchor=center]  {$E$};
		\filldraw[] (.25,0.25) circle (0pt) node[anchor=center]  {$C_{k}$};
		\filldraw[] (.25,0.75) circle (0pt) node[anchor=center]  {$B_{k}$};
		\filldraw[] (.25,-0.25) circle (0pt) node[anchor=center]  {$B_{k+1}$};

		\filldraw[] (1.25,0.75) circle (0pt) node[anchor=center]  {$B_{k-1}$};
		\filldraw[] (3.25,0.75) circle (0pt) node[anchor=center]  {$C$};
		\filldraw[] (2.25,0.75) circle (0pt) node[anchor=center]  {$B_2$};
		\filldraw[] (2.75,-0.25) circle (0pt) node[anchor=center]  {$B_1$};

		\end{tikzpicture}
		\caption{$\calP^I$ is empty, $E$ is a neighbour of $C_k$ but not
        an end-cell of $J$}
		\label{fig:P_2emptycase1}
\end{figure}
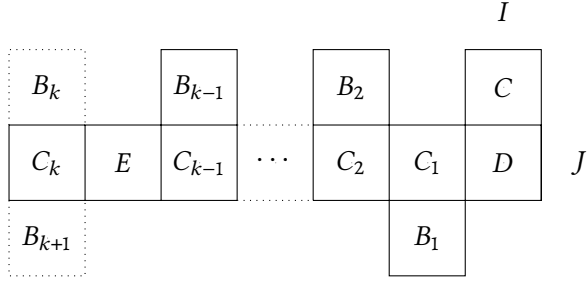

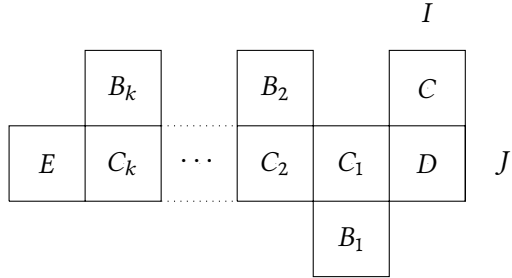
\begin{figure}
		\centering
		\begin{tikzpicture}[scale=2]
		\draw[]  (.5,0)--(1.5,0) (2,0)--(3.5,0)--(3.5,.5)--(2,.5) (1.5,.5)--(.5,.5)--(.5,0)
		(3.5,0)--(3.5,.5) (2.5,0)--(2.5,.5) (3,0)--(3,.5) (2,0)--(2,.5) (1.5,0)--(1.5,.5)
		(1,0)--(1,.5)
		(3,.5)--(3,1)--(3.5,1)--(3.5,.5)
		(3,0)--(3,-.5)--(2.5,-.5)--(2.5,0)
		(2,.5)--(2,1)--(2.5,1)--(2.5,.5)
		(1,.5)--(1,1)--(1.5,1)--(1.5,.5);

		\draw[dotted] (1.5,0)--(2,0) (1.5,0.5)--(2,0.5);

		\filldraw[] (3.25,1.25) circle (0pt) node[anchor=center]  {$I$};
		\filldraw[] (3.75,0.25) circle (0pt) node[anchor=center]  {$J$};

		\filldraw[] (3.25,0.25) circle (0pt) node[anchor=center]  {$D$};

		\filldraw[] (2.75,0.25) circle (0pt) node[anchor=center]  {$C_1$};
		\filldraw[] (2.25,0.25) circle (0pt) node[anchor=center]  {$C_2$};
		\filldraw[] (1.75,0.25) circle (0pt) node[anchor=center]  {$\cdots$};
		\filldraw[] (1.25,0.25) circle (0pt) node[anchor=center]  {$C_{k}$};
		\filldraw[] (.75,0.25) circle (0pt) node[anchor=center]  {$E$};

		\filldraw[] (1.25,0.75) circle (0pt) node[anchor=center]  {$B_{k}$};
		\filldraw[] (3.25,0.75) circle (0pt) node[anchor=center]  {$C$};
		\filldraw[] (2.25,0.75) circle (0pt) node[anchor=center]  {$B_2$};
		\filldraw[] (2.75,-0.25) circle (0pt) node[anchor=center]  {$B_1$};

		\end{tikzpicture}
		\caption{$\calP^I$ is empty, $E$ is an end-cell of $J$}
		\label{fig:P_2emptycase3}
\end{figure}

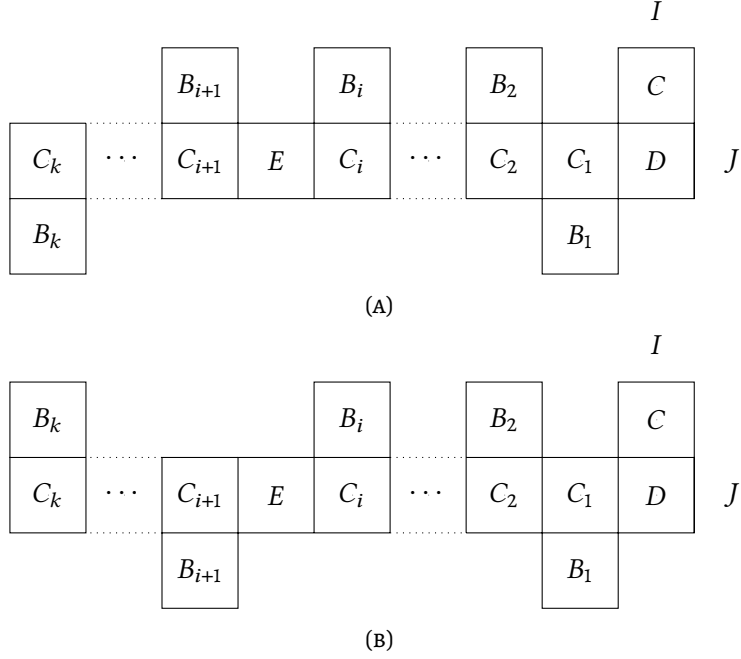
\begin{figure}%
	\begin{subfigure}[t]{10cm}
		\centering
		\begin{tikzpicture}[scale=2]
		\draw[]  (.5,0)--(1.5,0) (2,0)--(3.5,0)--(3.5,.5)--(2,.5) (1.5,.5)--(.5,.5)--(.5,0)--(0,0)--(0,.5)--(.5,.5) (-.5,0.5)--(-.5,0)--(-1,0)--(-1,.5)--(-.5,.5)
		(3.5,0)--(3.5,.5) (2.5,0)--(2.5,.5) (3,0)--(3,.5) (2,0)--(2,.5) (1.5,0)--(1.5,.5)
		(1,0)--(1,.5)
		(3,.5)--(3,1)--(3.5,1)--(3.5,.5)
		(3,0)--(3,-.5)--(2.5,-.5)--(2.5,0)
		(2,.5)--(2,1)--(2.5,1)--(2.5,.5)
		(1,.5)--(1,1)--(1.5,1)--(1.5,.5)
		(0,.5)--(0,1)--(.5,1)--(.5,.5)
		(-.5,0)--(-.5,-.5)--(-1,-.5)--(-1,0);

		\draw[dotted] (1.5,0)--(2,0) (1.5,0.5)--(2,0.5) (0,0)--(-.5,0) (0,.5)--(-.5,0.5);

		\filldraw[] (3.25,1.25) circle (0pt) node[anchor=center]  {$I$};
		\filldraw[] (3.75,0.25) circle (0pt) node[anchor=center]  {$J$};

		\filldraw[] (3.25,0.25) circle (0pt) node[anchor=center]  {$D$};

		\filldraw[] (2.75,0.25) circle (0pt) node[anchor=center]  {$C_1$};
		\filldraw[] (2.25,0.25) circle (0pt) node[anchor=center]  {$C_2$};
		\filldraw[] (1.75,0.25) circle (0pt) node[anchor=center]  {$\cdots$};
		\filldraw[] (1.25,0.25) circle (0pt) node[anchor=center]  {$C_{i}$};
		\filldraw[] (.75,0.25) circle (0pt) node[anchor=center]  {$E$};
		\filldraw[] (.25,0.25) circle (0pt) node[anchor=center]  {$C_{i+1}$};
		\filldraw[] (.25,0.75) circle (0pt) node[anchor=center]  {$B_{i+1}$};
		\filldraw[] (-.25,0.25) circle (0pt) node[anchor=center]  {$\cdots$};

		\filldraw[] (-.75,0.25) circle (0pt) node[anchor=center]  {$C_k$};
		\filldraw[] (-.75,-0.25) circle (0pt) node[anchor=center]  {$B_k$};
		\filldraw[] (1.25,0.75) circle (0pt) node[anchor=center]  {$B_{i}$};
		\filldraw[] (3.25,0.75) circle (0pt) node[anchor=center]  {$C$};
		\filldraw[] (2.25,0.75) circle (0pt) node[anchor=center]  {$B_2$};
		\filldraw[] (2.75,-0.25) circle (0pt) node[anchor=center]  {$B_1$};

		\end{tikzpicture}
		\caption{}%
	\end{subfigure}
	\\
	\begin{subfigure}[t]{10cm}
		\centering
		\begin{tikzpicture}[scale=2]
		\draw[]  (.5,0)--(1.5,0) (2,0)--(3.5,0)--(3.5,.5)--(2,.5) (1.5,.5)--(.5,.5)--(.5,0)--(0,0)--(0,.5)--(.5,.5) (-.5,0.5)--(-.5,0)--(-1,0)--(-1,.5)--(-.5,.5)
		(3.5,0)--(3.5,.5) (2.5,0)--(2.5,.5) (3,0)--(3,.5) (2,0)--(2,.5) (1.5,0)--(1.5,.5)
		(1,0)--(1,.5)
		(3,.5)--(3,1)--(3.5,1)--(3.5,.5)
		(3,0)--(3,-.5)--(2.5,-.5)--(2.5,0)
		(2,.5)--(2,1)--(2.5,1)--(2.5,.5)
		(1,.5)--(1,1)--(1.5,1)--(1.5,.5)
		(0,0)--(0,-.5)--(.5,-.5)--(.5,0)
		(-.5,.5)--(-.5,1)--(-1,1)--(-1,.5);

		\draw[dotted] (1.5,0)--(2,0) (1.5,0.5)--(2,0.5) (0,0)--(-.5,0) (0,.5)--(-.5,0.5);

		\filldraw[] (3.25,1.25) circle (0pt) node[anchor=center]  {$I$};
		\filldraw[] (3.75,0.25) circle (0pt) node[anchor=center]  {$J$};

		\filldraw[] (3.25,0.25) circle (0pt) node[anchor=center]  {$D$};

		\filldraw[] (2.75,0.25) circle (0pt) node[anchor=center]  {$C_1$};
		\filldraw[] (2.25,0.25) circle (0pt) node[anchor=center]  {$C_2$};
		\filldraw[] (1.75,0.25) circle (0pt) node[anchor=center]  {$\cdots$};
		\filldraw[] (1.25,0.25) circle (0pt) node[anchor=center]  {$C_{i}$};
		\filldraw[] (.75,0.25) circle (0pt) node[anchor=center]  {$E$};
		\filldraw[] (.25,0.25) circle (0pt) node[anchor=center]  {$C_{i+1}$};
		\filldraw[] (.25,-0.25) circle (0pt) node[anchor=center]  {$B_{i+1}$};

		\filldraw[] (-.25,0.25) circle (0pt) node[anchor=center]  {$\cdots$};
		\filldraw[] (-.75,0.25) circle (0pt) node[anchor=center]  {$C_k$};
		\filldraw[] (-.75,0.75) circle (0pt) node[anchor=center]  {$B_k$};
		\filldraw[] (1.25,0.75) circle (0pt) node[anchor=center]  {$B_{i}$};
		\filldraw[] (3.25,0.75) circle (0pt) node[anchor=center]  {$C$};
		\filldraw[] (2.25,0.75) circle (0pt) node[anchor=center]  {$B_2$};
		\filldraw[] (2.75,-0.25) circle (0pt) node[anchor=center]  {$B_1$};

		\end{tikzpicture}
		\caption{}%
	\end{subfigure}
		\caption{$\calP^I$ is empty, $E$ is not a neighbour of $C_k$}
	\label{fig:P_2emptycase2}
\end{figure}

The next lemma shows that simple thin polyominoes with the $S$-property
have a special collapse datum.
See Figure~\ref{figure:lemmaneeded} for an example to illustrate that
condition~\eqref{lem:collapsible:three} of the next lemma cannot be
deleted.

\begin{Lemma}\label{lem:collapsible}
Let $\MP$ be a simple thin polyomino with S-property. Assume that $\MP$ is
not a cell. Then there exists a collapse datum $(I, J,\MP^I)$ of
$\MP$ such that one of the following holds:

\begin{enumerate}
\item
\label{lem:collapsible:two}
The second end-cell of $J$ has at most two neighbour cells.

\item
\label{lem:collapsible:three}
If the second end-cell of $J$ has three neighbour cells, then one of its
neighbour cells is both a single cell and an end-cell of the maximal inner
interval containing it.

\end{enumerate}

\end{Lemma}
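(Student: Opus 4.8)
The plan is to isolate the one configuration that the case analysis preceding the lemma does not already settle, and then to dispose of it by induction on the number of cells of $\MP$. \emph{Reduction.} Since $\MP$ has the S-property and is not a cell, it is not a straight strip, hence has at least two maximal inner intervals and admits a collapse datum $(I,J,\MP^I)$ (by \cite[Lemma~3.6]{RinaldoRomeoHilbSeriesThinPolyominoes2021}; see Discussion~\ref{rem:neighbourhoodofJ}). Running through the four cases there: in the situations of Figures~\ref{fig:P_2nonempty}, \ref{fig:P_2emptycase3} and~\ref{fig:P_2emptycase2} the second end-cell of $J$ has at most two neighbour cells, so \eqref{lem:collapsible:two} holds for that datum; and the same holds in the situation of Figure~\ref{fig:P_2emptycase1} unless its end-cell $C_k$ has exactly three neighbour cells. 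Thus it suffices to treat a datum of the type of Figure~\ref{fig:P_2emptycase1} (so $\MP^I=\varnothing$) whose second end-cell $C_k$ has three neighbours, and to produce some collapse datum satisfying \eqref{lem:collapsible:two} or \eqref{lem:collapsible:three}.

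\emph{Restating the target.} In this case $\sigma:=C_k$ is the cell in which $J$ meets a perpendicular maximal inner interval $K$, with $\sigma$ in the interior of $K$ (its two $K$-neighbours lying above and below it) and with the single cell $E$ of $J$ being the $J$-neighbour of $\sigma$. As $E$ is single but not an end-cell, it is irrelevant to \eqref{lem:collapsible:three}; hence \eqref{lem:collapsible:three} holds for this datum exactly when the unique single cell of $K$ is adjacent to $\sigma$, i.e. when $K$ terminates at a single cell immediately on one side of $\sigma$.

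\emph{Induction.} I would induct on the number of cells of $\MP$. The base case is the $L$-tromino, whose collapse datum ($\MP^I=\varnothing$) has a single tip as its second end-cell, so \eqref{lem:collapsible:two} holds. For the inductive step, given a datum of the bad type above I prune the pendant interval $I$, setting $\MP'=\MP\minus I$ (legitimate here since $\MP^I=\varnothing$, so only the cells $C$ and $D$ are deleted). The cell $D$ is an end-cell of $J$ adjacent in $\MP$ only to $C$ and to the next cell of $J$, so $\MP'$ is again a simple thin polyomino; moreover pruning $I$ merely shortens $J$ by the end-cell $D$ and deletes $I$ together with its single cell $C$, leaving the single cell of every other maximal inner interval (in particular $E$ in the shortened $J$) unchanged, so $\MP'$ again has the S-property. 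By the inductive hypothesis $\MP'$ has a collapse datum satisfying \eqref{lem:collapsible:two} or \eqref{lem:collapsible:three}.

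\emph{The main obstacle} is lifting this good datum of $\MP'$ back to $\MP$. The two polyominoes differ only by the pendant $\{C,D\}$, which is re-attached at the end of the shortened copy of $J$; a good datum of $\MP'$ whose leaf interval and second-end-cell neighbourhood avoid this attachment site transfers verbatim and stays good. The real work is the finite local analysis at that site: because $D$ has exactly two neighbours in $\MP$, re-attaching it either leaves the datum unchanged and good, or promotes its second end-cell to $D$ itself, which then has two neighbours and so satisfies \eqref{lem:collapsible:two}. The one delicate residual case is when the inductive datum collapses along the shortened $J$ from its new end-cell; handling that case --- and checking that the configuration of Figure~\ref{figure:lemmaneeded}, in which \eqref{lem:collapsible:three} is genuinely unavoidable, is produced correctly --- is where the care is needed.
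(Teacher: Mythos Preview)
Your reduction and the decision to induct on the number of cells are both correct and match the paper. The gap is in the inductive step: you prune only the pendant $I=\{C,D\}$, whereas the paper prunes the \emph{other} side, setting
\[
\MP' \;=\; \bigl(\MP\setminus\{A\in\MP:\text{the path from $A$ to $E$ does not contain $C_k$}\}\bigr)\cup\{E\},
\]
i.e.\ it keeps only $E$ together with everything reachable from $E$ through $C_k$. With this choice the only maximal inner interval of $\MP'$ that is not already maximal in $\MP$ is the two-cell interval $\{C_k,E\}$, and the standing hypothesis that neither $B_k$ nor $B_{k+1}$ is simultaneously single and an end-cell forces $I'\neq\{C_k,E\}$ and $J'\neq\{C_k,E\}$. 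Hence the inductive datum $(I',J',\MP'^{I'})$ consists of genuine maximal intervals of $\MP$ lying entirely beyond $C_k$, and it lifts verbatim; the verification that $\MP\setminus(I'\cup\MP'^{I'})$ stays connected is a one-line union argument.

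Your pruning, by contrast, produces a $\MP'$ whose only non-lifting maximal interval is the \emph{shortened} $J$, and nothing prevents the inductive datum from using it: for instance one may have $I'=\{B_1,C_1\}$ (with $B_1$ single) and $J'=J\setminus\{D\}$. In $\MP$ the corresponding maximal interval is the full $J$, and then $(I',J,\,\cdot\,)$ is \emph{not} a collapse datum of $\MP$, because removing $\{B_1,C_1\}$ together with any sub-interval of $J$ isolates the re-attached pendant $\{C,D\}$ (recall $\MP^{I'}\subseteq J$, so $C$ can never be absorbed). This is exactly the ``delicate residual case'' you flag but do not resolve, and it is not merely a matter of care: with your choice of $\MP'$ the inductive datum may simply fail to lift. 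Switching to the paper's larger pruning eliminates the difficulty at no cost.
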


\begin{proof}
We proceed by induction on number of cells of $\MP$.
Let $(I, J,\MP^{I})$ be a collapse datum of $\MP$.
If the neighbourhood of $J$ in $\calP$ looks like the ones given in
Figures~\ref{fig:P_2nonempty}, \ref{fig:P_2emptycase3}
or~\ref{fig:P_2emptycase2}, 
then~\eqref{lem:collapsible:two} holds.
Therefore we may assume that we are in the situation of
Figure~\ref{fig:P_2emptycase1}.  Let $E$ be the single cell of $J$, $C_k$
be the second end-cell of $J$. We may assume that  $B_k$ and $B_{k+1}$ as
marked in Figure~\ref{fig:P_2emptycase1} exist.

If $B_k$ is both a single cell and an end-cell of the maximal inner interval
that contains $\{B_k, C_k, B_{k+1}\}$ 
then~\eqref{lem:collapsible:three} holds.
Hence we may assume that either $B_k$ is not a single cell or it is not an
end-cell of the maximal inner interval that contains 
$\{B_k, C_k, B_{k+1}\}$.
Similarly for $B_{k+1}$. 
Let
$$
\MP'= (\MP\setminus \{A\in \MP :
\text{the (unique) path between $A$ and $E$ does not contain $C_k$}\}) \cup
\{E\}.
$$

(E.g., in Figure~\ref{fig:P_2emptycase1}, $\MP'$ is the sub-polyomino
comprising $E$ and the cells reachable from $E$ through $C_k$.)
Observe that $\MP'$ is a simple thin polyomino. We first show that $\MP'$
has the S-property. Let $L$ be a maximal inner interval of $\MP'$.
Then $L$ is a maximal inner interval of $\MP$ or $L=\{C_k,E\}$.
In both cases,  $L$ has a unique single cell.
Thus, $\MP'$ has the S-property.
Also note that $\MP'$ is not a cell.
The number of cells in $\MP'$ are strictly less than the number of
cells in  $\MP$.
Therefore, by induction, there exists a collapse datum
$(I', J',\MP'^{I'})$ of $\MP'$
satisfying the assertion of the lemma for $\calP'$.

Note that $I' \neq \{C_k,E \}$ and $J' \neq \{C_k,E \}$; therefore $I'$ and
$J'$ are maximal inner intervals of $\calP$.
Let $J_1$ be a maximal inner interval of $\calP$ such that $J_1 \cap I'$ is
a cell.
Since $\calP$ is simple, $J_1 \subset \calP'$, so $J_1$ is a maximal inner
interval of $\calP'$. Hence $J_1 = J'$.
Note that $\calP'^{I'} \subseteq J'$ 
and that $I' \cap J' \not \subset \calP'^{I'}$.
Moreover, 
since $J' \neq \{C_k,E \}$, it follows that 
$\{C_k,E \} \subseteq \calP' \minus (I' \cup \calP'^{I'})$; hence
$\calP \minus (I' \cup \calP'^{I'})$
which equals
\[
(\calP' \minus (I' \cup \calP'^{I'})) \cup
\{A\in \MP :
\text{the path between $A$ and $E$ does not contain $C_k$}\}
\]
is a non-empty sub-polyomino of $\calP$.
Hence 
$(I', J',\MP'^{I'})$ is a collapse datum of $\MP$ that
satisfies the assertion of the lemma for $\calP$.
\end{proof}

\begin{figure}%
	\centering
	\begin{tikzpicture}[scale=2]
	\draw[]   (0,0)--(0,1.5)--(.5,1.5)

		(1,.5)--(1,0)--(1.5,0)--(1.5,.5)

	(1.5,.5)--(0,.5)

	(0,0)--(.5,0) (0,1)--(1.5,1)
	(0.5,0)--(0.5,1.5) (1,0.5)--(1,1)

	(1.5,.5)--(1.5,1)

	(0,1.5)--(-1,1.5)--(-1,0) (-.5,1.5)--(-.5,0)--(-1,0)
	(0,1)--(-1,1)
	(-.5,1)--(-2,1)--(-2,0)--(-1.5,0)--(-1.5,1)
	(-.5,.5)--(-2,.5)

	;

	\end{tikzpicture}
	\caption{}\label{figure:lemmaneeded}
\end{figure}

\begin{discussionbox}
\label{discussionbox:deletion}
Let $\MP$ be a simple thin polyomino and $C$ be a single cell in $\MP$. Let
$r_{\MP,C}(t)$ be the polynomial $\mathop{\sum}_{k\in \NN}r_kt^k$, where
$r_k$ is the number of $k$-rook configurations in $\MP$ that have a rook at
$C$. Let $r_{\MP,\widehat{C}}(t)$ be the polynomial $\mathop{\sum}_{k\in
\NN}r_kt^k$, where $r_k$ is the number of $k$-rook configurations in $\MP$
that have no rook at $C$. Then,
	$$r_{\MP}(t) = r_{\MP,\widehat{C}}(t) + r_{\MP,C}(t).$$

Let $I$ be the maximal inner interval of $\MP$ such that $C\in I$. Let
$r_{\MP,\widehat{I}}(t)$ be the polynomial $\mathop{\sum}_{k\in
\NN}r_kt^k$, where $r_k$ is the number of $k$-rook configurations in $\MP$
that has no rook at any cell of $I$. Note that $r_{\MP,C}(t) =
r_{\MP,\widehat{I}}(t)t$.
Hence,
\begin{equation}
\label{equation:deletion}
r_{\MP}(t) = r_{\MP,\widehat{C}}(t) + r_{\MP,\widehat{I}}(t)t.
\qedhere
\end{equation}

\end{discussionbox}

\begin{examplebox} 
\label{example:deletion}
We illustrate the above definitions now.
Let $\MP$ be the polyomino as shown in the Figure~\ref{figure:example}. 
Note that $C$ is a single cell in $\MP$. 
The polynomials $r_{\MP,\widehat{C}}(t)$ and $r_{\MP,C}(t)$ 
are calculated in Table~\ref{table:examplerook}.
The unique maximal inner interval $I$ containing $C$ is $\{B,C\}$.
Hence $r_{\MP,\widehat{I}}(t) = 1+t$, since this is the rook polynomial of
the polyomino consisting of just the cell $A$.
On the other hand, the number of $k$-rook configurations in $\MP$
for $k=0, 1, 2$ are, respectively, $1, 3, 1$; hence $r_{\MP}(t) = 1 + 3t +
t^2$. We thus see that
\[
r_{\MP}(t) = r_{\MP,\widehat{C}}(t) + r_{\MP,C}(t)
=r_{\MP,\widehat{C}}(t) + r_{\MP,\widehat{I}}(t)t.
\qedhere
\]
\end{examplebox}

 \begin{table}
 \begin{center}
     \begin{tabular}{|c | p{4cm} |c| p{4cm}| c|}
         \hline
         $k$ &        $k$-rook configurations that have a rook at $C$ &
         number     &    $k$-rook configurations that do not have a rook at $C$ & number \\ 
         \hline
         0 & There are no 0-rook configurations
that have a rook at $C$
         &  0 & $\varnothing$ & 1\\
         1 & $\{C\}$ & 1 & $\{A\}, \{B\}$ & 2\\
         2 & $\{C, A\}$ & 1 & none& 0\\
         $k\geq 3$ & none & 0 & none & 0\\ 
         \hline
& $r_{P,C}(t)$ & $t+t^2$ & $r_{P,\widehat{C}}(t)$ & $1+2t$ \\
         \hline
     \end{tabular}
  \end{center}
  \caption{Calculation of $r_{P,C }(t)$ and $r_{P,\widehat{C}}(t)$}
\label{table:examplerook}
  \end{table}

    \begin{figure}
   \centering
	\begin{tikzpicture}[scale=1]
        \draw[]   (0,0)--(0,2)--(2,2)--(2,1)--(1,1)--(1,0)--(0,0)
        (0,1)--(1,1)--(1,2)
	;
    \filldraw[] (.5,.5) circle (0pt) node[anchor=center] {$A$};
    \filldraw[] (.5,1.5) circle (0pt) node[anchor=center]{$B$};
     \filldraw[] (1.5,1.5) circle (0pt) node[anchor=center]{$C$};
   \end{tikzpicture}
	\caption{Polyomino from Example~\protect{\ref{example:deletion}}}
    \label{figure:example}
\end{figure}
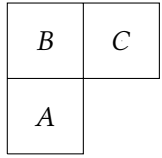
We now wish to express
$r_{\MP,\widehat{C}}(t)$ and $r_{\MP,\widehat{I}}(t)$ as the rook polynomials
of polyominoes when $\calP$ has the S-property.

\begin{discussionbox}
\label{dis:single}
Let $\MP$ be a simple thin polyomino that has the S-property. Let $(I,
J,\MP^{I})$ be a collapse datum of $\MP$ satisfying the
conclusion of Lemma~\ref{lem:collapsible}.
Let $C$ and $D$ be the cells of $I$, with $C$
being the single cell.
Let $E$ be the single cell of $J$.
Let $r_{\MP,\widehat{C}}(t)$ and $r_{\MP,\widehat{I}}(t)$ be as defined in
Discussion~\ref{discussionbox:deletion}.

Write $\calQ = \MP\setminus\{C\}$.
Then $r_{\MP,\widehat{C}}(t) = r_{\calQ}(t)$.
If $\MP^{I}$ is empty, define $\MR$ to be the polyomino $\calP \minus I$.
Otherwise, i.e. if
$\MP^{I}$ is a cell $E$, define $\calR$ to be the polyomino
$\calP \minus \{ C , E\}$.
Then $r_{\MP,\widehat{I}}(t) =r_{\MR}(t)$.
Thus~\eqref{equation:deletion} becomes
\begin{equation}
\label{equation:rPQR}
r_{\MP}(t) = r_{\calQ}(t) + r_{\calR}(t)t.
\end{equation}

Note that $\calQ$ does not have the S-property, so we cannot use an
inductive argument to prove Theorem~\ref{theorem:cd} directly.
Hence we need to rewrite
$r_{\calQ }(t )$ in terms of smaller polyominoes.
To this end, we observe that
$D$ is a single cell of the maximal inner interval $J$ inside $\calQ$.
Therefore, by~\eqref{equation:deletion},
\begin{equation}
\label{equation:deletionR}
r_{\calQ }(t ) =
r_{\MQ,\widehat{D}}(t) + r_{\MQ,\widehat{J}}(t) t .
\end{equation}
We note that
\begin{equation}
\label{equation:rRD}
r_{\MQ,\widehat{D}}(t) =r_{\MR}(t).
\qedhere
\end{equation}
\end{discussionbox}

We now find an expression for $r_{\MQ,\widehat{J}}(t)$.
Let $E, C_1,\ldots,C_k$ be the other cells of $J$ in $\MP$.
$E$ denotes the single cell of $J$ in $\MP$.
For $1\leq i \leq k-1$, let $B_i$ be the cell
in $\MP$ such that $B_i\notin J$ and $C_i$ is a neighbour cell of $B_i$.
(See Discussion~\ref{rem:neighbourhoodofJ} and
Figures~\ref{fig:P_2nonempty},
\ref{fig:P_2emptycase1},
\ref{fig:P_2emptycase3},
and~\ref{fig:P_2emptycase2} for notational conventions.)
When $E$ is the second end-cell or $C_k$ is an end-cell with two neighbour
cells, let $B_k$ be the cell in $\MP$ such that $B_k\notin J$ and $C_k$ is
a neighbour cell of $B_k$. When $C_k$ is an end-cell with three neighbour
cells, let $B_k$ and $B_{k+1}$ be the cells in $\MP$ such that $B_k,
B_{k+1}\notin J$ and $C_k$ is a neighbour cell of $B_k$ and $B_{k+1}$. In
the case when $C_k$ has three neighbour cells, by
Lemma~\ref{lem:collapsible}, we may assume that $B_{k+1}$ is both a single
cell and an end-cell of the maximal inner interval containing it.

Now for all $1\leq i \leq k-1$, define
$$\MQ_i := \{A\in \MQ : \text{the path between $A$ and $B_i$ does not contain $C_i$}\}.$$
Also, define
$$\widetilde{\MQ_k} := \{A\in \MQ : \text{the path between $A$ and $B_k$ does not contain $C_k$}\}.$$
When $E$ is the second end-cell or $C_k$ is an end-cell with two neighbour
cells, define $\MQ_k = \widetilde{\MQ_k}$. When $C_k$ is an end-cell with
three neighbour cells, let $\{a,b,a',b'\}$ be the vertices of $C_k$ where
$a,b \in V(B_k)$ and $a',b' \in V(B_{k+1})$. We define $\MQ_k$ as the
polyomino obtained from $\widetilde{\MQ_k} \cup \{B_{k+1}\}$ by the
identification of the vertices $a$ and $b$ of $V(B_k)$ with the vertices
$a'$ and $b'$ of $V(B_{k+1})$, respectively, by translating the
cell $B_{k+1}$.

\begin{Lemma}\label{lem:sproperty}
With notation as above, we have the following:
\begin{enumerate}

\item
\label{lem:sproperty:conn}
$\calQ_1, \ldots, \calQ_k$ are precisely the connected components of $\calQ
\minus J$.

\item
\label{lem:sproperty:sprop}
For each $1 \leq i \leq k$,
$\MQ_i$ is a simple thin polyomino with the S-property.

\item
\label{lem:sproperty:prod}
$r_{\MQ,\widehat{J}}(t) = \mathop\prod_{i=1}^{k} r_{\MQ_i}(t)$ and
$\sum_{i=1}^{k}r(\MQ_i)= r(\MP)-2$.
\end{enumerate}
\end{Lemma}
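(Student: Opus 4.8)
The plan is to treat the three assertions in turn, in each case reducing to the local picture of $\MP$ around $J$ recorded in Discussion~\ref{rem:neighbourhoodofJ} together with the extra feature supplied by Lemma~\ref{lem:collapsible}. For~\eqref{lem:sproperty:conn} I would first note that in $\calQ = \MP \minus \{C\}$ neither end-cell of $J$ has a neighbour outside $J$: the cell $E$ is single in $\MP$, so all its neighbours lie in $J$, while the only neighbour of $D$ outside $J$ in $\MP$ was $C$, which has been deleted. Hence one can leave $J$ only through the cells $C_1,\dots,C_k$, and $C_i$ only through its exterior neighbour $B_i$ (and $B_{k+1}$ in the three-neighbour case). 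Since paths in a simple thin polyomino are unique, the cells reachable from $B_i$ without crossing $C_i$ form a single component and these are pairwise disjoint, which exhibits $\calQ_1,\dots,\calQ_{k-1},\widetilde{\calQ_k}$—and, in the three-neighbour case, the isolated cell $\{B_{k+1}\}$, whose only neighbour is $C_k$—as exactly the connected components of $\calQ \minus J$, the last two being recombined into $\calQ_k$ by the gluing.

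For~\eqref{lem:sproperty:sprop}, each of $\calQ_1,\dots,\calQ_{k-1}$ and $\widetilde{\calQ_k}$ is a sub-polyomino of $\MP$ and so is automatically simple and thin; I would check the S-property as in the proof of Lemma~\ref{lem:collapsible}. Taking a maximal inner interval $L$ of $\calQ_i$, either $L$ is already a maximal inner interval of $\MP$ and inherits its unique single cell, or $L$ is the truncation at $C_i$ of such an interval, in which case one verifies directly that exactly one single cell survives. For the glued polyomino $\calQ_k$ the only new interval is the one through the relocated cell, and here I would invoke the conclusion of Lemma~\ref{lem:collapsible}\eqref{lem:collapsible:three}, that $B_{k+1}$ was single and an end-cell of its interval, to see that the gluing produces exactly one single cell there. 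I expect this verification, with its dependence on the several configurations of Discussion~\ref{rem:neighbourhoodofJ}, to be the main obstacle.

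For~\eqref{lem:sproperty:prod} I would count the rook configurations of $\calQ$ that avoid $J$. Any mutual attack between rooks in two different branches would require a straight row or column of cells of $\calQ$ joining them; since distinct branches meet only through $J$, such a row would have to lie in the maximal horizontal interval $J$ (impossible for a branch cell), and such a column would have to pass through a cell of $J$. The vertical strip through each cell of $J$ extends to only one side—toward its unique exterior neighbour—except at the three-neighbour cell $C_k$, whose strip $\{B_k,C_k,B_{k+1}\}$ reaches both $\widetilde{\calQ_k}$ and $B_{k+1}$. Thus the branches are independent apart from the attack between $B_k$ and $B_{k+1}$ through $C_k$, which the gluing defining $\calQ_k$ restores by placing the relocated $B_{k+1}$ into the column of $B_k$. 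This gives $r_{\MQ,\widehat{J}}(t) = \prod_{i=1}^k r_{\calQ_i}(t)$, so that $\sum_{i=1}^k r(\calQ_i) = \deg r_{\MQ,\widehat{J}}(t)$ is the maximal size of a rook configuration of $\MP$ avoiding $J$ and $C$.

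Finally, for the degree identity I would bound this maximum by $r(\MP)-2$ in both directions. Given any rook configuration on the branches, adding one rook on $C$ and one on $E$ yields a configuration of $\MP$: a rook on $C$ attacks only the cells of $I=\{C,D\}$ and a rook on $E$ only the cells of $J$, so neither attacks a branch rook nor the other, giving $\sum_i r(\calQ_i) \le r(\MP)-2$. Conversely, any configuration of $\MP$ has at most one rook in the row $J$ and at most one in the column $I$, so deleting these leaves at least $r(\MP)-2$ rooks, necessarily lying on $\calQ \minus J$ and forming a configuration counted by $r_{\MQ,\widehat{J}}(t)$; this gives $\sum_i r(\calQ_i) \ge r(\MP)-2$. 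Combining the two inequalities yields $\sum_{i=1}^k r(\calQ_i) = r(\MP)-2$.
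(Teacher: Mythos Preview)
Your proposal is correct and follows essentially the same approach as the paper. Two minor differences in part~\eqref{lem:sproperty:prod}: the paper derives the product formula by citing Proposition~\ref{proposition:disjUnion} on the connected components of $\calQ\minus J$ (leaving the role of the gluing in the three-neighbour case implicit), whereas you argue directly by analysing which attacks can pass through cells of $J$ and observing that the gluing of $B_{k+1}$ onto $\widetilde{\calQ_k}$ is exactly what encodes the attack through $C_k$; and for the lower bound $\sum_i r(\calQ_i)\ge r(\MP)-2$ the paper uses the cleaner fact that, by the S-property, the unique maximum rook configuration of $\MP$ is the set of all single cells, so removing $C$ and $E$ already lands in $\calQ\minus J$, while you instead delete from an arbitrary maximum configuration the at most one rook in $I$ and the at most one rook in $J$. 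Both variants are valid.
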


\begin{proof}

\eqref{lem:sproperty:conn}:
Each $\calQ_i$ is connected, $\calQ_i \cap \calQ_j = \varnothing$ for all
$i \neq j$ (since $P$ is simple)
and $\calQ \minus J = \calQ_1 \cup \cdots \cup \calQ_k$.

\eqref{lem:sproperty:sprop}:
Since $\MQ$ is simple thin, so is $\MQ_i$. Let $L$ be a maximal inner
interval of $\MQ_i$. Then, either $L$ is a maximal  inner interval of $\MP$
or $L = L'\setminus \{C_i\}$, where $L'$ is a maximal inner interval of
$\MP$. In both cases, $L$ has a unique single cell. Hence $\MQ_i$ has
the S-property.

\eqref{lem:sproperty:prod}:
By~\eqref{lem:sproperty:conn}
$r_{\MQ,\widehat{J}}(t)$ is the rook polynomial of $\calQ \minus J$.
Thus, by
Proposition~\ref{proposition:disjUnion}, $r_{\MQ,\widehat{J}}(t) =
\mathop\prod_{i=1}^{k} r_{\MQ_i}(t)$.
For any $k$-rook configuration $\alpha$ of $\MQ \setminus J$,
we note that
$\alpha \cup \{C, E\}$ is a $(k+2)$-rook configuration of $\MP$.
Hence $\sum_{i=1}^{k}r(\MQ_i)\leq r(\MP)-2$. On the
other hand, let $\beta$ be a $r(\MP)$-rook configuration of $\MP$. Since
$\MP$ has the S-property, $\beta$ is the only $r(\MP)$-rook configuration of
$\MP$ and $\beta$ is the collection of all single  cells  of $\MP$. Then,
$\beta\setminus\{C,E\}$ is a rook configuration of $\MQ \setminus J$.
Therefore $\sum_{i=1}^{k}r(\MQ_i)\geq r(\MP)-2$.
\end{proof}

We are now ready to prove Theorem~\ref{theorem:cd}.
\begin{proof}[Proof of the Theorem]
By Proposition~\ref{proposition:disjUnion}, we may assume that $\MP$ is a
simple thin polyomino with the S-property. Let $h_{\Bbbk[\calP
]}/(1-t)^{\dim(\Bbbk[\MP])}$ be the Hilbert series of $\Bbbk[\MP]$. By
\cite[Theorem\ 1.1]{RinaldoRomeoHilbSeriesThinPolyominoes2021},
$h_{\Bbbk[\calP ]}(t)=r_{\MP}(t)$. We proceed by induction on the rook
number $r(\MP)$.
If $r(\MP)$ is odd (in particular if $r(\calP )=1$),
then $r_{\MP}(-1)=0$.
Hence we may assume that $r(\calP )$ is even.
Let $(I, J,\MP^{I})$ be a collapse datum of $\MP$ satisfying
the conclusion of Lemma~\ref{lem:collapsible}.
Apply Discussion~\ref{dis:single}, adopting its notation.
Let $\calQ$ and $\calR$ be as in Discussion~\ref{dis:single}.
Then, by~\eqref{equation:rPQR}
$$
r_{\MP}(t) = r_{\calQ}(t) + r_{\MR}(t)t.
$$
Now apply Discussion~\ref{discussionbox:deletion} to the single cell $D$ of
the maximal inner interval $J$ of $\calQ$. By~\eqref{equation:deletionR},
\eqref{equation:rRD} and Lemma~\ref{lem:sproperty}, we see that
$$r_{\MP}(t) = (1+t)r_{\MR}(t) + t\prod_{i=1}^{k}	r_{\MQ_i}(t).$$

By Lemma~\ref{lem:sproperty} and induction hypothesis, $\Bbbk[\MQ_i]$ is CD
for all $1\leq i\leq k$. If $r(\MQ_i)$ is odd for some $i$, then
$r_{\MP}(-1)=0$.
Therefore we may assume that $r(\MQ_i)$ is even for all $i$.
\begin{align*}
(-1)^{\lfloor\frac{r(\MP)}{2}\rfloor}r_{\MP}(-1) & =
(-1)^{\frac{r(\MP)}{2}+1}\prod_{i=1}^{k} r_{\MQ_i}(-1)
\\
& =(-1)^{\frac{r(\MP)-2}{2}}\prod_{i=1}^{k} r_{\MQ_i}(-1)
\\
& = \prod_{i=1}^{k}	(-1)^{\frac{r(\MQ_i)}{2}}r_{\MQ_i}(-1) & \text{by
Lemma~\protect{\ref{lem:sproperty}}}\\
& \geq 0 & \text{by induction}.
\end{align*}
This completes the proof of the theorem.
\end{proof}

\def\cfudot#1{\ifmmode\setbox7\hbox{$\accent"5E#1$}\else
  \setbox7\hbox{\accent"5E#1}\penalty 10000\relax\fi\raise 1\ht7
  \hbox{\raise.1ex\hbox to 1\wd7{\hss.\hss}}\penalty 10000 \hskip-1\wd7\penalty
  10000\box7}

\end{document}